\numberwithin{equation}{section}
\newtheorem{lemma}{Lemma}
\newtheorem{theorem}{Theorem}
\newtheorem{corollary}{Corollary}
\theoremstyle{definition}
\newtheorem{remark}{Remark}
\definecolor{customgreen}{rgb}{0.0, 0.5, 0.0}
\title[Asymptotic expansions for the radii of starlikeness]{Asymptotic expansions for the radii\\ of starlikeness of normalised Bessel functions}
\author[\'A. Baricz]{\'Arp\'ad Baricz}
\address{Department of Economics, Babe\c{s}-Bolyai University, Cluj-Napoca, Romania}
\address{Institute of Applied Mathematics, \'Obuda University, Budapest, Hungary}
\email{bariczocsi@yahoo.com}
\author[G. Nemes]{Gerg\H{o} Nemes}
\address{Alfr\'ed R\'enyi Institute of Mathematics, Budapest, Hungary}
\email{nemes.gergo@renyi.hu}
\keywords{asymptotic expansions, Bessel functions, zeros of Bessel functions, Rayleigh sums, Laurent series, ordinary potential polynomials, radius of starlikeness}
\subjclass[2010]{41A60, 30D15, 30A08, 33C10, 33C20}
\begin{document}

\begin{abstract} The asymptotic behaviour, with respect to the large order, of the radii of starlikeness of two types of normalised Bessel functions is considered. We derive complete asymptotic expansions for the radii of starlikeness and provide recurrence relations for the coefficients of these expansions. The proofs rely on the notion of Rayleigh sums and asymptotic inversion. The techniques employed in the paper could be useful to treat similar problems where inversion of asymptotic expansions is involved.
\end{abstract}

\maketitle

\section{Introduction}

Bessel functions are important members of classical special functions and appear frequently in problems related to mathematical analysis, mathematical physics or engineering. The study of their geometric properties from the point of view of complex function theory was initiated in the 1960's by Brown, Hayden, Kreyszig, Merkes, Scott, Robertson and Wilf (see for example the papers \cite{brown}, \cite{brown2}, \cite{brown3}, \cite{hayden}, \cite{todd}, \cite{merkes}, \cite{robertson} and \cite{wilf}). Following the proof by de Branges \cite{branges} of the famous Bieberbach conjecture on univalent functions in 1985, the hypergeometric functions came into the spotlight of complex function theory, and the geometric properties of these functions have been studied intensively (see, e.g, \cite{kustner}, \cite{kustner2}, \cite{miller}, \cite{samy} and \cite{rusch}). However, some of the results obtained on starlikeness and convexity of hypergeometric functions were stated in the form of sufficient conditions only (and not necessary), and in some cases it was not possible to find the optimal range of parameters so that the corresponding hypergeometric functions belong to some subclasses of univalent functions. Motivated by the results on hypergeometric functions, during the last decade, the geometric properties (such as univalence, starlikeness, convexity, uniform convexity, etc.) of Bessel functions have also been studied extensively (see for example the papers \cite{aby}, \cite{bks}, \cite{bos}, \cite{basz}, \cite{ds} and the references therein). The case of Bessel functions has been more fortunate: the radii and order of starlikeness and convexity (uniform convexity as well) for normalised Bessel functions of the first kind were determined successfully and studied intensively.

The main aim of the present paper is to make a contribution to the subject from the point of view of asymptotic analysis. By using the asymptotic behaviour of the Rayleigh sums (convergent Laurent series expansions at infinity) of the positive zeros of the Bessel function of the first kind, we derive complete asymptotic expansions, with respect to the large order, for the radii of starlikeness of two types of normalised Bessel functions. We provide recurrence relations, in terms of ordinary potential polynomials, for the coefficients of these expansions. The results presented in this paper improve, complement and extend the known results in the literature, and our method is more precise than that of Wilf's which is based on Euler--Rayleigh inequalities (see for example the papers \cite{wilf} and \cite{aby} for more details).

It is worth mentioning that our approach can be useful to tackle similar problems where asymptotic inversion is involved. For example, the derivations of the asymptotic expansions for the large zeros of Bessel functions \cite{olver} and for the inverse of the normalised incomplete gamma function \cite{nemes} can be supplemented by our methods (the asymptotic nature of the expansions in these works was not proved rigorously).

The rest of the paper is organised as follows. In Section \ref{Sec2}, we present the main results of the paper: the asymptotic expansions for the radii of starlikeness of two kinds of normalised Bessel functions. The proofs of the theorems and those of the necessary lemmas are given in Section \ref{Sec3}. Finally, the paper concludes with a short discussion in Section \ref{Sec4}.

\section{Main results}\label{Sec2}

Before stating our results, we introduce the necessary notation and definitions. For any positive integer $k$ and real $\nu$, $\nu>-1$, we define the Rayleigh function or Rayleigh sum via
\[
\sigma _k (\nu ) = \sum\limits_{n = 1}^\infty \frac{1}{j_{\nu ,n}^{2k} } ,
\]
where $j_{\nu,n}$ denotes the $n$th positive zero of the Bessel function of the first kind $J_\nu$. The following preliminary results are important ingredients in the proof of our main results concerning the asymptotic expansions for the radii of starlikeness of normalised Bessel functions, however because of the importance of these Rayleigh sums in problems related to Bessel functions they may be of independent interest. The proofs of these lemmas are given in Section \ref{Sec3}. Throughout this paper, if not stated otherwise, empty sums are taken to be zero.

\begin{lemma}\label{lem1}
For any positive integer $k$ and real $\nu$, $\nu>k$, the Rayleigh function $\sigma_k(\nu)$ has the convergent Laurent
expansion
\begin{equation}\label{eq1}
\sigma_k(\nu)= \frac{1}{\nu ^{2k - 1}}\sum\limits_{m = 0}^\infty \frac{\sigma _m^{(k)}}{\nu ^m },
\end{equation}
where, for any fixed non-negative integer $m$, the coefficients $\sigma _m^{(k)}$ can be computed by the recurrence relation
\begin{equation}\label{eq2}
\sigma _m^{(1)}  = \frac{( - 1)^m }{4},\quad \sigma _m^{(k)}  = \sum\limits_{i = 0}^m \sum\limits_{j = 0}^i \sum\limits_{n = 1}^{k - 1} ( - k)^{m - i} \sigma _j^{(n)} \sigma _{i - j}^{(k - n)}, \quad k\geq2.
\end{equation}
In particular, we have
\[
\sigma_m^{(2)}=\frac{(-1)^m}{16}\left(2^{m+2}-m-3\right),\quad \sigma_m^{(3)}= \frac{(-1)^m}{256}\left(3^{m+4}-2^{m+7}+2(m+4)(m+6)+7\right).
\]
\end{lemma}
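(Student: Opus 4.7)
My plan is to derive a quadratic functional recurrence for $\sigma_k(\nu)$ with respect to $k$ and then transfer it to a recurrence on the Laurent coefficients $\sigma_m^{(k)}$ by formal series manipulation. The starting point is the Hadamard factorisation
$$G_\nu(z) := \Gamma(\nu+1)(2/z)^\nu J_\nu(z) = \prod_{n\geq 1}\left(1 - \frac{z^2}{j_{\nu,n}^2}\right),$$
whose logarithm is $-\sum_{k\geq 1}\sigma_k(\nu)z^{2k}/k$. Substituting $J_\nu(z) = (z/2)^\nu G_\nu(z)/\Gamma(\nu+1)$ into Bessel's equation reduces it to $zG_\nu'' + (2\nu+1)G_\nu' + zG_\nu = 0$. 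Rewritten in terms of $F = \log G_\nu$, this becomes $z(F'' + (F')^2) + (2\nu+1)F' + z = 0$, and comparing coefficients of $z^{2k-1}$ for $k\geq 1$ gives
$$\sigma_1(\nu) = \frac{1}{4(\nu+1)}, \qquad (\nu+k)\sigma_k(\nu) = \sum_{n=1}^{k-1}\sigma_n(\nu)\sigma_{k-n}(\nu) \quad (k\geq 2).$$

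A straightforward induction on $k$ then shows that $\sigma_k(\nu)$ is a rational function of $\nu$ whose poles lie in $\{-1,-2,\ldots,-k\}$; the pole at $-k$ is genuine, so its Laurent expansion at infinity converges in the annulus $|\nu|>k$, which covers the hypothesis $\nu>k$. Degree counting is consistent: $\sigma_n\sigma_{k-n}$ is of order $\nu^{-(2k-2)}$ and multiplication by $(\nu+k)^{-1}$ yields the $\nu^{-(2k-1)}$ prefactor of (\ref{eq1}). To extract (\ref{eq2}), substitute the ansatz (\ref{eq1}) into the functional recurrence, expand
$$\frac{1}{\nu+k} = \frac{1}{\nu}\sum_{r\geq 0}\frac{(-k)^r}{\nu^r},$$
form the double Cauchy product over the indices in $\sigma_n(\nu)\sigma_{k-n}(\nu)$, and read off the coefficient of $\nu^{-(2k-1+m)}$; after setting $i=j+l$ and $m=i+r$ one obtains precisely the triple sum in (\ref{eq2}). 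The base case $\sigma_m^{(1)} = (-1)^m/4$ drops out of the geometric series for $1/(4(\nu+1))$.

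For the explicit formulas at $k=2$ and $k=3$, iterating the functional recurrence once and twice gives the closed forms $\sigma_2(\nu) = 1/[16(\nu+1)^2(\nu+2)]$ and $\sigma_3(\nu) = 1/[32(\nu+1)^3(\nu+2)(\nu+3)]$; expanding these by partial fractions (or by convolving the binomial series for $(1+j/\nu)^{-s}$, $j\in\{1,2,3\}$) and summing elementary series such as $\sum_{j=0}^m j\,2^{-j}$ produces the stated expressions for $\sigma_m^{(2)}$ and $\sigma_m^{(3)}$. The only genuinely non-mechanical step is the reduction of Bessel's ODE to the quadratic recurrence; everything else is formal manipulation of absolutely convergent series. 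The main hazard is bookkeeping in the triple convolution, namely ensuring that the $\nu^{-(2k-2)}$ contributed by the product of two lower-order expansions combines with the $\nu^{-1}$ from $(\nu+k)^{-1}$ and the geometric tail $\sum_{r}(-k/\nu)^{r}$ in exactly the way that reproduces the indexing $(i,j,n)$ displayed in (\ref{eq2}).
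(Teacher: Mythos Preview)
Your proof is correct and follows essentially the same route as the paper: both hinge on the convolution recurrence $(\nu+k)\sigma_k(\nu)=\sum_{n=1}^{k-1}\sigma_n(\nu)\sigma_{k-n}(\nu)$, then substitute the Laurent ansatz together with the geometric expansion of $(\nu+k)^{-1}$ and match powers of $\nu$. The only difference is that the paper cites this recurrence from the literature (Meiman, Kishore, Kerimov), whereas you derive it from scratch by reducing Bessel's equation to $zG_\nu''+(2\nu+1)G_\nu'+zG_\nu=0$ and reading off coefficients of $\log G_\nu$; this makes your argument self-contained but is otherwise the same proof.
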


\begin{lemma}\label{lem2}
For any positive integer $k$ and positive real $\nu$, the Rayleigh sum $\sigma_k(\nu)$ satisfies the inequalities
\begin{equation}\label{eq4R}
\sigma _k (\nu ) \leq \binom{2k}{k}\frac{1}{2^{2k + 1} (2k - 1)}\frac{1}{\nu ^{2k - 1}}   \leq \frac{1}{\nu ^{2k - 1} }.
\end{equation}
\end{lemma}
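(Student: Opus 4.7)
The plan is to prove the first inequality by induction on $k$, using a quadratic recursion for the Rayleigh sums derived from the Bessel equation; the second inequality will follow from the trivial bound $\binom{2k}{k}\leq 4^k$.

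The first step is to establish the recursion
\[
(\nu+k)\sigma_k(\nu) = \sum_{j=1}^{k-1}\sigma_j(\nu)\sigma_{k-j}(\nu),\qquad k\geq 2,
\]
together with the initial value $\sigma_1(\nu) = 1/(4(\nu+1))$. To derive it, I would introduce the generating function $f(z) := \sum_{k\geq 1}\sigma_k(\nu)z^{2k}$; the Mittag--Leffler expansion of $J_{\nu+1}/J_\nu$ identifies $f(z) = zJ_{\nu+1}(z)/(2J_\nu(z))$. Combining the Bessel ODE with the recurrence $J_{\nu+1}(z) = (\nu/z)J_\nu(z) - J_\nu'(z)$ yields the Riccati-type identity
\[
zf'(z) + 2\nu f(z) = 2f(z)^2 + \frac{z^2}{2},
\]
and coefficient comparison then produces the stated recursion.

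Next I would set $c_k := \binom{2k}{k}/(2^{2k+1}(2k-1))$ and observe the key identity
\[
c_k = \sum_{j=1}^{k-1}c_j c_{k-j}, \qquad k\geq 2.
\]
Indeed, writing $c_k = C_{k-1}/4^k$, where $C_{k-1}$ denotes the $(k-1)$st Catalan number, the identity reduces to the classical Catalan convolution $C_{k-1} = \sum_{i=0}^{k-2} C_i C_{k-2-i}$.

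I would then prove $\nu^{2k-1}\sigma_k(\nu) \leq c_k$ by induction. The base case $k=1$ is the trivial estimate $\nu/(4(\nu+1)) < 1/4 = c_1$. For $k\geq 2$, the quadratic recursion, the inductive hypothesis, and $\nu/(\nu+k)\leq 1$ together give
\[
\nu^{2k-1}\sigma_k(\nu) = \frac{\nu}{\nu+k}\sum_{j=1}^{k-1}(\nu^{2j-1}\sigma_j(\nu))(\nu^{2(k-j)-1}\sigma_{k-j}(\nu)) \leq \sum_{j=1}^{k-1}c_j c_{k-j} = c_k,
\]
which is exactly the first inequality in \eqref{eq4R}. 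The second inequality is immediate from $c_k\leq 1$. The main obstacle is the derivation of the Riccati-type ODE for $f$, and consequently the quadratic recursion; once that identity is in place, the induction and the Catalan-number bookkeeping are entirely mechanical.
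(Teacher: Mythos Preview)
Your proposal is correct and follows essentially the same route as the paper: both arguments use the quadratic convolution recursion \(\sigma_k(\nu)=(\nu+k)^{-1}\sum_{j=1}^{k-1}\sigma_j(\nu)\sigma_{k-j}(\nu)\), induct on $k$, and close the induction with the Catalan-type identity \(c_k=\sum_{j=1}^{k-1}c_jc_{k-j}\). The only differences are expository: the paper cites the convolution relation from the literature rather than deriving it via the Riccati equation, and it uses the identity for the $c_k$'s tacitly in the final line of its induction, whereas you make the Catalan-number connection explicit.
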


The coefficients in our asymptotic expansions for the radii of starlikeness will be given in terms of recurrence relations involving ordinary potential polynomials of the $\sigma _m^{(k)}$'s. For any fixed complex $\mu$ and arbitrary non-negative integer $n$, the ordinary potential polynomial $\mathsf{A}_{\mu,n}$ is a polynomial in $n$ (complex) variables and is defined by the generating function
\[
\left( 1 + \sum\limits_{n = 1}^\infty x_n z^n \right)^\mu = \sum\limits_{n = 0}^\infty \mathsf{A}_{\mu,n} (x_1 ,x_2 ,\ldots ,x_n )z^n .
\]
Thus, in particular, $\mathsf{A}_{\mu,0}  = 1$, $\mathsf{A}_{\mu,1}  = \mu x_1$ and $\mathsf{A}_{\mu,2}  = \mu x_2  + \binom{\mu}{2}x_1^2$. For basic properties of these polynomials, see, e.g., \cite[Appendix]{nemes2}.

Let $\mathbb{D}_r=\{z\in\mathbb{C}:|z|<r\}$, where $r>0$, and let $f:\mathbb{D}_r\to\mathbb{C}$ be a normalised univalent or one-to-one function, which satisfies the conditions $f(0)=0$ and $f'(0)=1$, that is, $f$ is of the form $f(z)=z+a_2z^2+a_3z^3+\dots$, where the coefficients $a_2, a_3,\dots$ are real or complex numbers. The radius of univalence of the function $f$ is the largest radius $r$ for which $f$ maps univalently the open disk $\mathbb{D}_r$ into some domain in the complex plane. Similarly, the radius of starlikeness of the function $f$ is the largest radius $r$ for which $f$ maps $\mathbb{D}_r$ into a starlike domain with respect to the origin. Since the class of normalised starlike functions (with respect to the origin) is a subclass of univalent functions, the radius of univalence of $f$ is clearly allways greater or equal than the radius of starlikeness of the same function $f$. In view of the analytic characterization
of starlike functions, the radius of starlikeness is given by
\[
r^{\star}(f)=\sup\left\{r>0: \Re\! \left(\frac{zf'(z)}{f(z)}\right)>0\ \mbox{for all}\ z\in\mathbb{D}_r\right\}
\]
(see, e.g., \cite{nevanlinna}).

Now, consider the normalised Bessel function of the first kind $\phi_{\nu}:\mathbb{D}_r\to\mathbb{C}$, defined by
\begin{equation}\label{phidef}
\phi_{\nu}(z)=2^{\nu}\Gamma(\nu+1)z^{1-\frac{\nu}{2}}J_{\nu}(\sqrt{z}),
\end{equation}
where $\nu>-1$ and $J_{\nu}$ is the Bessel function of the first kind of order $\nu$ \cite[\href{http://dlmf.nist.gov/10.2.ii}{\S10.2(ii)}]{dlmf}. The function $\phi_{\nu}$ is also well-defined if $\nu<-1$ and $\nu$ is not a negative integer, however the condition $\nu>-1$ is important in our paper since only in this case the zeros of the Bessel function $J_{\nu}$ are all real (according to an old result of von Lommel \cite[Ch. XV, \S15.25]{watson}) and the reality of the zeros is essential in this paper. This is because all the results in the paper \cite{bks} on the radii of starlikeness, which we shall use here, hold under the condition that $\nu>-1$. It was shown (see \cite[Corollary 1]{bks}) that for $\nu>-1$ the radius of starlikeness of $\phi_{\nu}$ is the smallest positive root of the equation $\sqrt{z}J_\nu'(\sqrt{z})+(2-\nu)J_{\nu}(\sqrt{z})=0$. We also know that the radius of starlikeness of $\phi_{\nu}$, denoted by $r^\star(\phi_\nu)$, corresponds to the radius of univalence of $\phi_{\nu}$ (see \cite[Theorem 2]{aby}), and that it satisfies
\[
\mathop {\lim }\limits_{\nu  \to  + \infty }\frac{r^\star(\phi_\nu)}{4(\nu+1)} = 1.
\]
Moreover, by using some Euler--Rayleigh inequalities, it was proved (see also \cite[Theorem 2]{aby}) that the asymptotic formula
\begin{equation}\label{asym1}
r^\star(\phi_\nu)=4(\nu+1)\left(1-\frac{1}{\nu}+\mathcal{O}\!\left(\frac{1}{\nu^2}\right)\right)
\end{equation}
holds as $\nu\to+\infty$. The following theorem gives a complete asymptotic expansion for the radii of starlikeness of the function $\phi_\nu$.

\begin{theorem}\label{th1}
The radius of starlikeness $r^\star(\phi_\nu)$ has the asymptotic expansion
\begin{equation}\label{asymp1}
r^\star(\phi_\nu) \sim 4\nu\left(1+ \sum\limits_{k = 1}^\infty \frac{\varepsilon _k}{\nu ^k }\right) = 4\nu\left(1  + \frac{1}{\nu^2} - \frac{4}{\nu ^4} + \frac{2}{\nu ^5} + \frac{44}{\nu ^6} - \cdots\right),
\end{equation}
as $\nu \to +\infty$, where the coefficients $\varepsilon_k$ can be determined recursively by
\begin{equation}\label{eq7}
\varepsilon _k  = ( - 1)^{k + 1}  - \sum\limits_{n = 1}^{k - 1} ( - 1)^{k - n} \varepsilon _n  - \sum\limits_{n = 2}^{k + 1} 4^n \sum\limits_{m = 0}^{k - n + 1} \sigma _{k - n - m + 1}^{(n)} \mathsf{A}_{n,m} (\varepsilon _1 ,\varepsilon _2 , \ldots ,\varepsilon _m ).
\end{equation}
\end{theorem}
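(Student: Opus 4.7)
The plan is to convert the defining equation for $r^\star(\phi_\nu)$ into a transcendental equation involving only Rayleigh sums, invert it formally using the Laurent expansion of Lemma \ref{lem1}, and then promote the formal expansion to a genuine asymptotic one by bounding the remainders with Lemma \ref{lem2}. First, I would apply the infinite product representation of $J_\nu$ to obtain the partial-fraction identity $xJ_\nu'(x)/J_\nu(x) = \nu - 2\sum_{n\geq 1} x^2/(j_{\nu,n}^2 - x^2)$ on $|x|<j_{\nu,1}$. Substituting into $\sqrt{z}J_\nu'(\sqrt{z}) + (2-\nu)J_\nu(\sqrt{z}) = 0$ and dividing by $J_\nu(\sqrt{z})$ (nonzero on $(0,j_{\nu,1}^2)$) yields the equivalent condition
\[
\sum_{n=1}^\infty \frac{z}{j_{\nu,n}^2 - z} = 1.
\]
Expanding each term as a geometric series in $z/j_{\nu,n}^2$ and interchanging summations produces the cleaner form $\sum_{k\geq 1}\sigma_k(\nu)\,z^k = 1$. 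On $(0,j_{\nu,1}^2)$ the left-hand side is continuous and strictly increasing from $0$ to $+\infty$, so this equation has a unique positive root there, which must coincide with $r^\star(\phi_\nu)$; since $r^\star(\phi_\nu)\sim 4\nu\ll j_{\nu,1}^2\sim\nu^2$, this framework is applicable for large $\nu$.

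Next, I would carry out the formal inversion. Substituting the ansatz $z = 4\nu\, S(\nu)$ with $S(\nu) = 1+\sum_{k\geq 1}\varepsilon_k/\nu^k$, expanding each $\sigma_n(\nu)$ via the Laurent series \eqref{eq1}, and expanding $S(\nu)^n = \sum_{j\geq 0}\mathsf{A}_{n,j}(\varepsilon_1,\ldots,\varepsilon_j)\nu^{-j}$ via the definition of the ordinary potential polynomials, one represents $\sum_{n\geq 1}\sigma_n(\nu)z^n$ as a formal Laurent series in $1/\nu$. The $(n,m)=(1,0)$ contribution equals $S(\nu)$ (since $\sigma_0^{(1)}=1/4$), the terms with $n=1,m\geq 1$ give $S(\nu)\sum_{m\geq 1}(-1)^m/\nu^m$ (using $\sigma_m^{(1)}=(-1)^m/4$), and the remainder comes from $n\geq 2$. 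Isolating $S(\nu)$, equating coefficients of $\nu^{-k}$ on both sides, and solving for $\varepsilon_k$ reproduces exactly the recurrence \eqref{eq7}.

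To turn this formal expansion into a genuine asymptotic one, set $r_N := 4\nu(1+\sum_{k=1}^N\varepsilon_k/\nu^k)$ and estimate the residue $\sum_{k\geq 1}\sigma_k(\nu)r_N^k - 1$. For each $n\leq N+1$ one truncates the Laurent expansion of $\sigma_n(\nu)$ at an order chosen so that the remainder is $O(\nu^{-(N+1)})$ (using the fact that \eqref{eq1} converges for $|\nu|>n$); for the tail $n\geq N+2$, Lemma \ref{lem2} gives $r_N^n\sigma_n(\nu) \leq (4\nu\, C)^n\nu^{-(2n-1)}$ for a constant $C$ independent of $\nu$, and summing the resulting geometric series produces another $O(\nu^{-(N+1)})$ contribution. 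By construction of the $\varepsilon_k$, the retained terms sum to $1+O(\nu^{-(N+1)})$. Finally, the mean value theorem applied to the strictly increasing function $z\mapsto \sum_{k\geq 1}\sigma_k(\nu)z^k$ on the interval between $r_N$ and $r^\star(\phi_\nu)$, together with the lower bound $\sum_{k\geq 1}k\sigma_k(\nu)z^{k-1}\geq\sigma_1(\nu)\sim 1/(4\nu)$ near $z=4\nu$, converts this into $r^\star(\phi_\nu)-r_N = O(\nu^{-N})$, which is precisely \eqref{asymp1}. The principal obstacle is the careful bookkeeping in this last step: one must control simultaneously the finitely many Laurent-series remainders, the infinite tail in $n$ (uniformly for $z$ near $4\nu$), and the implicit-function conversion from an $O(\nu^{-(N+1)})$ error in the equation to an $O(\nu^{-N})$ error in the root.
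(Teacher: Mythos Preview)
Your proposal is correct and follows the same overall strategy as the paper: both reduce the problem to the scalar equation $\sum_{k\ge 1}\sigma_k(\nu)\,z^k=1$ via the logarithmic derivative (equivalently, the Mittag--Leffler expansion) of $\phi_\nu$, use Lemma~\ref{lem1} to derive the recurrence~\eqref{eq7} by matching powers of $\nu^{-1}$, and control the tail $\sum_{k\ge N}\sigma_k(\nu)z^k$ with Lemma~\ref{lem2}.

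The one substantive difference lies in the rigorous justification step. The paper writes the \emph{exact} root as $r^\star(\phi_\nu)=4\nu(1+\varepsilon(\nu))$, sets $\varepsilon(\nu)=\sum_{n=1}^{N-1}\varepsilon_n\nu^{-n}+R_N(\nu)$, and proves $R_N(\nu)=\mathcal{O}_N(\nu^{-N})$ by induction on $N$, isolating the term $R_N(\nu)(4\nu)\sigma_1(\nu)$ after all other contributions are shown to cancel or be $\mathcal{O}_N(\nu^{-N})$. You instead substitute the \emph{approximate} root $r_N$ into the equation, bound the residual $\sum_k\sigma_k(\nu)r_N^k-1$, and then use the mean value theorem together with the strict monotonicity of $z\mapsto\sum_k\sigma_k(\nu)z^k$ on $(0,j_{\nu,1}^2)$ to convert the residual bound into $r^\star(\phi_\nu)-r_N=\mathcal{O}(\nu^{-N})$. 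Your route is slightly more elementary and has the pleasant side effect that monotonicity automatically identifies the unique root in $(0,j_{\nu,1}^2)$ as $r^\star(\phi_\nu)$; the paper, by contrast, has to argue separately (via the second Rayleigh sum of the zeros of $\phi_\nu'$) that the expansion it finds corresponds to the smallest root rather than a higher one.
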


\begin{remark} It is readily seen from \eqref{eq2}, by using an induction argument in $k$, that $4^k \sigma_m^{(k)}$ is an integer for any $k\geq 1$ and $m\geq 0$. Consequently, the coefficients $\varepsilon_k$ are all integer valued.
\end{remark}

\begin{remark} By employing a simple algebraic manipulation, the asymptotic expansion \eqref{asymp1} may be re-arranged to the alternative form
\[
r^\star(\phi_\nu) \sim 4(\nu+1)\left(1+ \sum\limits_{k = 1}^\infty \frac{\delta_k}{\nu ^k }\right) = 4(\nu+1)\left(1 -\frac{1}{\nu} + \frac{2}{\nu^2} - \frac{2}{\nu ^3}- \frac{2}{\nu ^4} +  \cdots\right),
\]
which shares a closer resemblance to \eqref{asym1}. The coefficients $\delta_k$ can be computed via the simple recursion $\delta_1 = -1$ and $\delta_k  = \varepsilon_k - \delta_{k-1}$ for $k\geq 2$.
\end{remark}

Next, consider the normalised Bessel function of the first kind $\varphi_{\nu}:\mathbb{D}_r\to\mathbb{C}$, defined by
\begin{equation}\label{varphidef}
\varphi_{\nu}(z)=2^{\nu}\Gamma(\nu+1)z^{1-{\nu}}J_{\nu}(z),
\end{equation}
where $\nu>-1$. This function is also well-defined if $\nu<-1$ and $\nu$ is not a negative integer, however the condition $\nu>-1$ is again essential because of the reality of the zeros of $J_{\nu}$. It is known (cf. \cite[Theorem 3]{brown} or \cite[Corollary 1]{bks}) that for $\nu>-1$ the radius of starlikeness of the function $\varphi_{\nu}$ is the smallest positive root of the equation
$zJ_{\nu}'(z)+(1-\nu)J_{\nu}(z)=0$. We also know that the radius of starlikeness of $\varphi_{\nu}$, denoted by $r^\star(\varphi_\nu)$, corresponds to the radius of univalence of $\varphi_{\nu}$ (see \cite[Theorem 2]{wilf}), and, as $\nu\to+\infty$, it satisfies \cite[Theorem 2]{wilf}
\begin{equation}\label{asym2}
r^\star(\varphi_\nu)=\sqrt{2\nu}\left(1+\frac{1}{4\nu}+\mathcal{O}\!\left(\frac{1}{\nu^2}\right)\right).
\end{equation}
While studying the asymptotic behaviour of the quantity $r^\star(\varphi_\nu)$, we observed that the recurrence relation for the asymptotic expansion coefficients is much simpler and presentable if one considers instead the square of $r^\star(\varphi_{\nu})$. Therefore, in the following theorem, we provide a complete asymptotic expansion for $(r^\star(\varphi_{\nu}))^2$.
\begin{theorem}\label{th2}
The square of the radius of starlikeness $r^\star(\varphi_{\nu})$ has the asymptotic expansion
\begin{equation}\label{asymp2}
(r^\star(\varphi_{\nu}))^2\sim 2\nu\left(1+\sum\limits_{k = 1}^\infty \frac{\rho_k}{\nu^{k}}\right)=2\nu\left(1+\frac{1}{2\nu}+\frac{1}{2\nu^2}-\frac{1}{2\nu^3}-\frac{1}{2\nu^4}+\cdots\right),
\end{equation}
as $\nu \to +\infty$, where the coefficients $\rho_k$ can be determined recursively by
\[
\rho _k  = ( - 1)^{k + 1}  - \sum\limits_{n = 1}^{k - 1} ( - 1)^{k - n} \rho _n  - \sum\limits_{n = 2}^{k + 1} 2^{n + 1} \sum\limits_{m = 0}^{k - n + 1} \sigma _{k - n - m + 1}^{(n)} \mathsf{A}_{n,m} (\rho _1 ,\rho _2 , \ldots ,\rho _m ) .
\]
\end{theorem}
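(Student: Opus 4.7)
The plan is to proceed in parallel with the argument behind Theorem~\ref{th1}, replacing $\phi_\nu$ by $\varphi_\nu$. Starting from the defining relation $zJ_\nu'(z)+(1-\nu)J_\nu(z)=0$ and the Mittag--Leffler expansion $J_\nu'(z)/J_\nu(z) = \nu/z - \sum_n 2z/(j_{\nu,n}^2 - z^2)$ obtained from the Hadamard product of $J_\nu$, a short calculation gives $z\varphi_\nu'(z)/\varphi_\nu(z) = 1 - \sum_n 2z^2/(j_{\nu,n}^2 - z^2)$, so the square $w=(r^\star(\varphi_\nu))^2$ is the smallest positive solution of
\[
1 = 2\sum_{k=1}^\infty \sigma_k(\nu)w^k.
\]
The double series converges for $|w|<j_{\nu,1}^2$, and since $w\sim 2\nu$ while $j_{\nu,1}^2 \sim \nu^2$ as $\nu\to+\infty$ by \eqref{asym2}, the relevant root lies well inside the disk of convergence for all sufficiently large $\nu$.

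Next, I will insert the ansatz $w = 2\nu S$ with $S = 1+\sum_{k\geq 1}\rho_k/\nu^k$ and apply Lemma~\ref{lem1} to each $\sigma_k(\nu)$, obtaining
\[
1 = \sum_{k=1}^\infty \frac{2^{k+1}S^k}{\nu^{k-1}} \sum_{m=0}^\infty \frac{\sigma_m^{(k)}}{\nu^m}.
\]
Because $\sigma_m^{(1)}=(-1)^m/4$, the $k=1$ summand equals $S\sum_{m\geq 0}(-1)^m/\nu^m$, whose $m=0$ part is exactly $S$. Isolating that part yields
\[
S = 1 - S\sum_{m=1}^\infty \frac{(-1)^m}{\nu^m} - \sum_{n=2}^\infty \frac{2^{n+1}S^n}{\nu^{n-1}} \sum_{m=0}^\infty \frac{\sigma_m^{(n)}}{\nu^m}.
\]
Expanding $S^n = \sum_{m\geq 0}\mathsf{A}_{n,m}(\rho_1,\ldots,\rho_m)/\nu^m$ via the generating function of the ordinary potential polynomials and equating the coefficients of $\nu^{-k}$ on both sides---the middle term contributing $(-1)^{k+1} - \sum_{n=1}^{k-1}(-1)^{k-n}\rho_n$, while the double sum produces the advertised triple sum over $n\in\{2,\ldots,k+1\}$ and $m\in\{0,\ldots,k-n+1\}$---delivers precisely the recurrence for $\rho_k$ stated in the theorem.

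The main obstacle is upgrading these formal coefficient manipulations to a genuine asymptotic expansion. For each truncation index $N$, set $S_N = 1+\sum_{k=1}^N \rho_k/\nu^k$ and $F(w,\nu) := 2\sum_{k\geq 1}\sigma_k(\nu)w^k - 1$. The coefficient matching, combined with Lemma~\ref{lem2} to control the tails of the $k$-sum in $F$ and of the $m$-sum in each Laurent expansion of $\sigma_k(\nu)$, yields $F(2\nu S_N,\nu) = \mathcal{O}(\nu^{-N-1})$ uniformly in $N$ (for each fixed $N$, as $\nu\to+\infty$). Lemma~\ref{lem2} also gives the estimate $\partial_w F(w,\nu)|_{w=2\nu} = 2\sigma_1(\nu) + \mathcal{O}(\nu^{-2}) = 1/(2\nu) + \mathcal{O}(\nu^{-2})$, which is bounded away from zero, so a Taylor expansion of $F$ around $2\nu S_N$ together with an implicit-function / contraction-mapping argument yields $|(r^\star(\varphi_\nu))^2 - 2\nu S_N| = \mathcal{O}(\nu^{-N})$, thereby establishing \eqref{asymp2}. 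The delicate step is keeping track of the tail estimates uniformly as $N$ grows; this is exactly where the explicit bound \eqref{eq4R} is indispensable.
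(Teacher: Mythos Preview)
Your proposal is correct and follows essentially the same route as the paper: both reduce to the identity $\tfrac12 = \sum_{k\ge 1}(1+\rho(\nu))^k(2\nu)^k\sigma_k(\nu)$ via the logarithmic derivative of the Hadamard product, expand formally using Lemma~\ref{lem1} and the potential polynomials to obtain the recurrence, and control the tails with Lemma~\ref{lem2}. The only difference is the packaging of the error estimate at the end: the paper isolates the $k=1$ term and proves by induction on $N$ that the true remainder $R_N(\nu)=\rho(\nu)-\sum_{n<N}\rho_n\nu^{-n}$ is $\mathcal{O}_N(\nu^{-N})$, whereas you evaluate $F$ at the truncation $2\nu S_N$ and close with a mean-value/implicit-function step---a cosmetic variation on the same argument.
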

A simple corollary of Theorem \ref{th2} and Exercise 8.4 of Olver \cite[p. 22]{olver2} is the following generalisation of \eqref{asym2}.
\begin{corollary} The radius of starlikeness $r^\star(\varphi_{\nu})$ has the asymptotic expansion
\[
r^\star(\varphi_{\nu}) \sim \sqrt{2\nu}\left(1+\sum\limits_{k = 1}^\infty \frac{\pi_k}{\nu ^{k}}\right)=\sqrt{2\nu}\left(1+\frac{1}{4\nu}+\frac{7}{32\nu^2}-\frac{39}{128\nu^3}-\frac{405}{2048\nu^4}+\cdots\right),
\]
as $\nu \to +\infty$, where the coefficients $\pi_k$ can be computed via the recurrence relation
\[
\pi _k  = \frac{1}{2}\rho _k  - \frac{1}{2}\sum\limits_{n = 1}^{k - 1} \pi _n \pi _{k - n} .
\]
\end{corollary}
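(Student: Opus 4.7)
The plan is straightforward: extract $r^\star(\varphi_\nu)$ from $(r^\star(\varphi_\nu))^2$ by taking the (positive) square root of the asymptotic expansion \eqref{asymp2}, and then determine the coefficients $\pi_k$ by formally squaring the resulting series and matching terms with $\rho_k$.

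First I would note that since $r^\star(\varphi_\nu)$ is a positive real quantity and $(r^\star(\varphi_\nu))^2 \sim 2\nu$, we have $r^\star(\varphi_\nu) \sim \sqrt{2\nu}$ as $\nu \to +\infty$, so it is the positive square root that is relevant. Theorem~\ref{th2} gives
\[
\frac{(r^\star(\varphi_\nu))^2}{2\nu}\sim 1+\sum_{k=1}^\infty\frac{\rho_k}{\nu^k},
\]
so, by the general result on formal manipulation of asymptotic power series (Olver \cite[Ex.~8.4, p.~22]{olver2}), the positive square root of the right-hand side is itself an asymptotic expansion of $r^\star(\varphi_\nu)/\sqrt{2\nu}$ in powers of $1/\nu$. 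Writing this expansion as $1+\sum_{k\geq 1}\pi_k\nu^{-k}$ gives the claimed form.

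To derive the recurrence, I would square the expansion for $r^\star(\varphi_\nu)/\sqrt{2\nu}$ formally and compare with the expansion of $(r^\star(\varphi_\nu))^2/(2\nu)$. That is, from
\[
\left(1+\sum_{k=1}^\infty \frac{\pi_k}{\nu^k}\right)^{\!2}=1+2\sum_{k=1}^\infty\frac{\pi_k}{\nu^k}+\sum_{k=2}^\infty\frac{1}{\nu^k}\sum_{n=1}^{k-1}\pi_n\pi_{k-n}
\]
and uniqueness of asymptotic expansions, equating the coefficients of $\nu^{-k}$ yields $\rho_k=2\pi_k+\sum_{n=1}^{k-1}\pi_n\pi_{k-n}$, which rearranges to the stated recurrence. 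The initial values $\pi_1=\tfrac{1}{2}\rho_1=\tfrac{1}{4}$, $\pi_2=\tfrac{1}{2}\rho_2-\tfrac{1}{2}\pi_1^2=\tfrac{7}{32}$, and so on, reproduce the numerical coefficients displayed in the statement.

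There is no substantive obstacle here: the only point requiring care is the appeal to \cite[Ex.~8.4]{olver2} to justify that composition with the analytic function $x\mapsto\sqrt{1+x}$ preserves the asymptotic expansion, which is legitimate because the expansion of $(r^\star(\varphi_\nu))^2/(2\nu)-1$ vanishes at infinity. Once that is in place, the remainder of the proof is a routine coefficient extraction.
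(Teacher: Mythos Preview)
Your proposal is correct and follows exactly the route the paper indicates: the paper states the corollary as a direct consequence of Theorem~\ref{th2} together with Olver \cite[Ex.~8.4, p.~22]{olver2}, and your argument---taking the positive square root of \eqref{asymp2} via that exercise and then matching coefficients after squaring---is precisely this.
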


\begin{remark} A third type of normalised Bessel function of the first kind that is studied frequently in the literature is
\[
\theta_{\nu}(z)=\left( 2^\nu  \Gamma (\nu  + 1)J_\nu  (z)\right)^{\frac{1}{\nu}},
\]
with $\nu>0$. It is known (see \cite[Theorem 2]{brown} or \cite[Corollary 1]{bks}) that the radius of starlikeness of $\theta_{\nu}$, denoted by $r^\star(\theta_{\nu})$, is the smallest positive zero of $J_\nu'$. Accordingly, by a result of Olver \cite{olver},
\[
r^\star(\theta_{\nu}) \sim \nu  - \frac{\alpha }{2^{\frac{1}{3}}}\nu^{\frac{1}{3}}  + \frac{2^{\frac{1}{3}} }{10}\left( \frac{3}{2}\alpha ^2  + \frac{1}{\alpha }\right)\frac{1}{\nu ^{\frac{1}{3}}} + \frac{1}{100}\left( \frac{\alpha ^3 }{7} - 4 + \frac{1}{\alpha ^3 } \right)\frac{1}{\nu} +  \cdots 
\]
as $\nu \to +\infty$, where $\alpha$ denotes the first negative zero of the derivative of the Airy function $\operatorname{Ai}$. Olver derived his expansions for the zeros of $J_\nu'$ through an asymptotic inversion of $J_\nu'$ in its transition region. The asymptotic nature of the expansions was not demonstrated rigorously, however, it is possible to fill this gap by the methods of the present work. The details are not considered here.
\end{remark}

\section{Proofs of the results}\label{Sec3}

In this section, we prove Lemmas \ref{lem1} and \ref{lem2}, and Theorems \ref{th1} and \ref{th2}.

\begin{proof}[Proof of Lemma \ref{lem1}]
Since $\sigma _k (\nu ) = \mathcal{O}(\nu ^{1 - 2k})$ (cf. Lemma \ref{lem2}) and $\sigma _k (\nu )$ is a meromorphic function of $\nu$ with poles at $\nu=-1,-2,\ldots,-k$, it admits a Laurent series expansion of the form \eqref{eq1}, which converges for $\nu>k$. In particular, for $k=1$,
\[
\sigma _1 (\nu ) = \frac{1}{4(\nu  + 1)} = \frac{1}{\nu }\sum\limits_{m = 0}^\infty \frac{( - 1)^m}{4}\frac{1}{\nu ^m} ,\quad \nu>1
\]
(see, for instance, \cite{kerimov} or \cite[Ch. XV, \S15.51]{watson}). Hence, $\sigma _m^{(1)}  = \frac{( - 1)^m }{4}$ for $m\geq0$. To derive the recurrence formula \eqref{eq2}, we  substitute \eqref{eq1} into the known convolution relation (see, e.g., \cite{meiman}, \cite[p. 532]{kishore} or \cite{kerimov})
\begin{equation}\label{eqrec}
\sigma _k (\nu ) = \frac{1}{\nu  + k}\sum\limits_{n = 1}^{k - 1} \sigma _n (\nu )\sigma _{k - n} (\nu ),\quad k\geq 2,
\end{equation}
employ the expansion
\begin{equation}\label{expansion}
\frac{1}{\nu  + k} = \frac{1}{\nu }\sum\limits_{m = 0}^\infty \frac{( - k)^m}{\nu ^m},\quad \nu>k,
\end{equation}
and equate like powers of $\nu$.
\end{proof}

\begin{proof}[Proof of Lemma \ref{lem2}]
We prove, by induction on $k$, that for any positive integer $k$ and positive real $\nu$, the Rayleigh function $\sigma _k (\nu)$ satisfies the first inequality in \eqref{eq4R}. The second inequality follows easily by noting that for any positive integer $k$,
\[
\binom{2k}{k}\frac{1}{ 2^{2k + 1} (2k - 1) } \le \frac{ 2^{2k} }{ k^{1/2} }\frac{1}{ 2^{2k + 1} }\frac{2}{k} = \frac{1}{ k^{3/2} } \leq 1.
\]
For $k=1$, the first inequality in \eqref{eq4R} states that
\[
\sigma _1 (\nu ) \le \frac{1}{4\nu},
\]
for any $\nu>0$. This is clearly true since $\sigma _1 (\nu ) = \frac{1}{4(\nu  + 1)}$. Let $k\geq 2$ and suppose that the first inequality in \eqref{eq4R} holds for all $\sigma_m (\nu )$ with $1\leq m\leq k-1$ and $\nu>0$. Then, by employing the convolution relation \eqref{eqrec}, we find
\begin{align*}
\sigma _k (\nu ) & \le \frac{1}{\nu  + k}\sum\limits_{n = 1}^{k - 1} \binom{2n}{n}\frac{1}{2^{2n + 1} (2n - 1)}\frac{1}{\nu ^{2n - 1} }\binom{2k-2n}{k-n}\frac{1}{2^{2k - 2n + 1} (2k - 2n - 1)}\frac{1}{\nu ^{2k - 2n - 1}}
\\ & \le \frac{1}{\nu ^{2k - 1}}\sum\limits_{n = 1}^{k - 1} \binom{2n}{n}\frac{1}{2^{2n + 1} (2n - 1)}\binom{2k-2n}{k-n}\frac{1}{2^{2k - 2n + 1} (2k - 2n - 1)}
\\ & = \frac{1}{\nu ^{2k - 1}}\binom{2k}{k}\frac{1}{2^{2k + 1} (2k - 1)}.
\end{align*}
\end{proof}

\begin{proof}[Proof of Theorem \ref{th1}] Assume that $\nu \geq 0$. From the infinite product representation of the Bessel function $J_\nu$ \cite[\href{http://dlmf.nist.gov/10.21.E15}{Eq. 10.21.15}]{dlmf} and the definition \eqref{phidef} of $\phi_\nu$ we can assert that
\[
\phi_\nu (z) = z\prod\limits_{n = 1}^\infty \left( 1-\frac{z}{j_{\nu,n}^2}\right) 
\]
for any $z \in \mathbb{C}$, where $j_{\nu,n}$ denotes the $n$th positive zero of $J_\nu$. Taking the logarithm of both sides and differentiating with respect to $z$, we deduce
\begin{equation}\label{logderphi}
\frac{\phi_{\nu}'(z)}{\phi_{\nu}(z)}=\frac{1}{z}+\sum_{n=1}^\infty\frac{1}{z-j_{\nu,n}^2}.
\end{equation}
It is known (see the proof of \cite[Theorem 1]{bks}) that the radius of starlikeness of the function $\phi_{\nu}$ equals to the first positive zero of $\phi'_{\nu}$. Accordingly, \eqref{logderphi} vanishes at $r^\star(\phi_\nu),$ that is
\begin{equation}\label{logderphi2}
\frac{1}{r^\star(\phi_\nu)}=\sum_{n=1}^\infty\frac{1}{j_{\nu,n}^2-r^\star(\phi_\nu)}.
\end{equation}
Now, in view of \eqref{asym1} let $r^\star(\phi_\nu) = 4\nu(1  + \varepsilon (\nu ))$, where $\varepsilon (\nu )=\mathcal{O}(\nu^{-2})$ as $\nu \to+\infty$. Since $j_{\nu,n}>\nu$ for $\nu \geq 0$ and $n\geq 1$ (cf. \cite[\href{http://dlmf.nist.gov/10.21.E3}{Eq. 10.21.3}]{dlmf}), we can re-arrange \eqref{logderphi2} in the form
\begin{align*}
& \frac{1}{4\nu( 1 + \varepsilon (\nu ))}  = \sum\limits_{n = 1}^\infty  \frac{1}{j_{\nu ,n}^2  - 4\nu(1  + \varepsilon (\nu ))}  = \sum\limits_{n = 1}^\infty  \frac{1}{j_{\nu ,n}^2}\frac{1}{1 - \cfrac{4\nu(1  + \varepsilon (\nu ))}{j_{\nu ,n}^2 }}
 \\ & = \sum\limits_{n = 1}^\infty  \frac{1}{j_{\nu ,n}^2 }\sum\limits_{k = 0}^\infty \frac{(4\nu(1  + \varepsilon (\nu )))^k }{j_{\nu ,n}^{2k}}  = \sum\limits_{k = 0}^\infty  (4\nu(1  + \varepsilon (\nu )))^k \sum\limits_{n = 1}^\infty \frac{1}{j_{\nu ,n}^{2k + 2} }  = \sum\limits_{k = 0}^\infty  (4\nu ( 1+ \varepsilon (\nu )))^k \sigma _{k + 1} (\nu ),
\end{align*}
or
\begin{equation}\label{eqvarimp}
1 = \sum\limits_{k = 1}^\infty \left( 1 + \varepsilon (\nu )\right)^k (4\nu )^k \sigma _k (\nu ),
\end{equation}
provided $\nu$ is sufficiently large. Now, we write
\begin{equation}\label{truncasymp}
\varepsilon (\nu ) = \sum\limits_{n = 1}^{N - 1} \frac{\varepsilon_n}{\nu^n}  + R_N (\nu),
\end{equation}
where the coefficients $\varepsilon_n$ are given by the recurrence relation \eqref{eq7}. We shall prove by induction on $N$ that $R_N (\nu ) = \mathcal{O}_N(\nu ^{-N})$ for any $N\geq 1$ as $\nu \to +\infty$. (Throughout this paper, we use subscripts in the
$\mathcal{O}$ notations to indicate the dependence of the implied constant on certain parameters.) It can be shown, using the second Rayleigh sum of the zeros of $\phi'_{\nu}$ (cf. \cite{aby}), that the second positive zero is at least of order $\nu^{3/2}$, whence \eqref{truncasymp} indeed corresponds to $r^\star(\phi_\nu)$. The statement is true when $N=1$ since $R_1 (\nu ) = \varepsilon (\nu )$. Let $N\geq 2$ and suppose that the statement holds for all $R_k (\nu )$ with $1\leq k\leq N-1$. We re-write equation \eqref{eqvarimp} in the form
\begin{equation}\label{eq6}
1 = \left( 1 + \varepsilon (\nu ) \right)(4\nu )\sigma _1 (\nu ) + \sum\limits_{k = 2}^{N-1} \left( 1 + \varepsilon (\nu ) \right)^k (4\nu )^k \sigma _k (\nu )  + \sum\limits_{k = N}^\infty \left( 1 + \varepsilon (\nu ) \right)^k (4\nu )^k \sigma _k (\nu ).
\end{equation}
We would like to simplify the right-hand side as much as possible. With the aid of Lemma \ref{lem1}, the first term can be re-expressed as
\begin{align*}
& \left( 1 + \varepsilon (\nu ) \right)(4\nu )\sigma _1 (\nu ) = \left( 1 + \sum\limits_{n = 1}^{N - 1} \frac{\varepsilon _n }{\nu ^n }  + R_N (\nu ) \right)(4\nu )\sigma _1 (\nu )
\\ & = \left( 1 + \sum\limits_{n = 1}^{N - 1} \frac{\varepsilon _n }{\nu ^n} \right)(4\nu )\sigma _1 (\nu ) + R_N (\nu )(4\nu )\sigma _1 (\nu ) \\ & = \left( 1 + \sum\limits_{n = 1}^{N - 1} \frac{\varepsilon _n }{\nu ^n} \right)\sum\limits_{n = 0}^\infty  \frac{( - 1)^n}{\nu ^n }  + R_N (\nu )(4\nu )\sigma _1 (\nu )
\\ & = 1 + \sum\limits_{k = 1}^{N - 1} \left( ( - 1)^k  + \sum\limits_{n = 1}^k ( - 1)^{k - n} \varepsilon _n \right)\frac{1}{\nu ^n }  + \mathcal{O}_N\! \left( \frac{1}{\nu ^N} \right) + R_N (\nu )(4\nu )\sigma _1 (\nu ).
\end{align*}
We expand the second term on the right-hand side of \eqref{eq6} by employing Lemma \ref{lem1}, the induction hypothesis on the remainder terms $R_1(\nu),\ldots,R_{N-1}(\nu)$ and the definition of the ordinary potential polynomials:
\begin{align*} 
& \sum\limits_{k = 2}^{N - 1} \left( 1 + \varepsilon (\nu ) \right)^k (4\nu )^k \sigma _k (\nu )  = \sum\limits_{k = 2}^{N - 1} \left( 1 + \sum\limits_{n = 1}^{N - k} \frac{\varepsilon _n }{\nu ^n }  + R_{N - k + 1} (\nu ) \right)^k \frac{4^k}{\nu^{k - 1}}\sum\limits_{m = 0}^\infty  \frac{\sigma _m^{(k)} }{\nu ^m } 
\\ & = \sum\limits_{k = 2}^{N - 1} \left( 1 + \sum\limits_{n = 1}^{N - k} \frac{\varepsilon _n }{\nu ^n }  + \mathcal{O}_{N - k + 1}\! \left( \frac{1}{\nu ^{N - k + 1} } \right) \right)^k \frac{4^k}{\nu^{k - 1}}\sum\limits_{m = 0}^\infty \frac{\sigma _m^{(k)} }{\nu ^m }
\\ &= \sum\limits_{k = 2}^{N - 1} \left( \sum\limits_{n = 0}^{N - k} \mathsf{A}_{k,n} (\varepsilon _1 ,\varepsilon _2 , \ldots ,\varepsilon _n )\frac{1}{\nu ^n }  + \mathcal{O}_{N - k + 1}\!  \left( \frac{1}{\nu ^{N - k + 1} } \right) \right)\frac{4^k }{\nu^{k - 1}}\sum\limits_{m = 0}^\infty  \frac{\sigma _m^{(k)} }{\nu ^m } 
 \\ &= \sum\limits_{k = 2}^{N - 1} \frac{4^k }{\nu^{k - 1}}\left( \sum\limits_{n = 0}^{N - k} \left( \sum\limits_{m = 0}^n \sigma _{n - m}^{(k)} \mathsf{A}_{k,m} (\varepsilon _1 ,\varepsilon _2 , \ldots ,\varepsilon _m )  \right)\frac{1}{\nu ^n}  + \mathcal{O}_{N - k + 1}\!  \left( \frac{1}{\nu ^{N - k + 1} } \right) \right)
\\ & = \sum\limits_{k = 2}^{N - 1} 4^k \sum\limits_{n = 0}^{N - k} \left( \sum\limits_{m = 0}^n \sigma _{n - m}^{(k)} \mathsf{A}_{k,m} (\varepsilon _1 ,\varepsilon _2 , \ldots ,\varepsilon _m ) \right)\frac{1}{\nu ^{n + k - 1}}  + \mathcal{O}_N\!  \left( \frac{1}{\nu ^N } \right)
\\ & = \sum\limits_{k = 1}^{N - 2} \left( \sum\limits_{n = 2}^{k + 1} 4^n \sum\limits_{m = 0}^{k - n + 1} \sigma _{k - n - m + 1}^{(n)} \mathsf{A}_{n,m} (\varepsilon _1 ,\varepsilon _2 , \ldots ,\varepsilon _m )  \right)\frac{1}{\nu ^k } 
\\ & \quad\; + \left( \sum\limits_{n = 2}^{N - 1} 4^n \sum\limits_{m = 0}^{N - n} \sigma _{N - n - m}^{(n)} \mathsf{A}_{n,m} (\varepsilon _1 ,\varepsilon _2 , \ldots ,\varepsilon _m )  \right)\frac{1}{\nu ^{N - 1} } + \mathcal{O}_N\!  \left( \frac{1}{\nu ^N }\right).
\end{align*}
Finally, from Lemmas \ref{lem1} and \ref{lem2}, we can infer that the third term on the right-hand side of \eqref{eq6} is
\begin{align*}
& \sum\limits_{k = N}^\infty  \left( 1 + \varepsilon (\nu ) \right)^k (4\nu )^k \sigma _k (\nu )  = \left( 1 + \varepsilon (\nu ) \right)^{N } (4\nu )^{N } \sigma _{N } (\nu ) + \sum\limits_{k = N + 1}^\infty  \left( 1 + \varepsilon (\nu ) \right)^k (4\nu )^k \sigma _k (\nu )
\\ & = \left( 1 + \varepsilon (\nu ) \right)^{N} (4\nu )^{N} \left( \frac{\sigma _0^{(N)} }{\nu ^{2N - 1} } + \frac{1}{\nu ^{2N - 1} }\sum\limits_{m = 1}^\infty  \frac{\sigma _m^{(N)} }{\nu ^m } \right) + \sum\limits_{k = N + 1}^\infty  \left( 1 + \varepsilon (\nu )\right)^k (4\nu )^k \sigma _k (\nu )
\\ & = \left( 1 + \varepsilon (\nu )\right)^{N } \frac{4^{N} \sigma _0^{(N)} }{\nu ^{N-1} } + \left( 1 + \varepsilon (\nu )\right)^{N}  \frac{4^{N}}{\nu ^{N-1} }\sum\limits_{m = 1}^\infty  \frac{\sigma _m^{(N)} }{\nu ^m }  + \sum\limits_{k = N + 1}^\infty  \left( 1 + \varepsilon (\nu)\right)^k (4\nu )^k \sigma _k (\nu )
\\ & = \frac{4^{N} \sigma _0^{(N)} }{\nu ^{N-1} }+ \mathcal{O}_N \! \left( \frac{1}{\nu ^{N} }\right) + \sum\limits_{k = N + 1}^\infty  \left( 1 + \varepsilon (\nu ) \right)^k (4\nu )^k \sigma _k (\nu )
\\ & = \frac{4^{N} \sigma _0^{(N)} }{\nu ^{N-1}} + \mathcal{O}_N \! \left( \frac{1}{\nu ^{N} }\right) + \mathcal{O}(1)\sum\limits_{k = N + 1}^\infty  \left( 1 + \varepsilon (\nu ) \right)^k (4\nu )^k \frac{1}{\nu ^{2k - 1} } 
\\ & = \frac{4^{N} \sigma _0^{(N)} }{\nu ^{N-1} } + \mathcal{O}_N \! \left( \frac{1}{\nu ^{N} } \right) + \mathcal{O}(1)\left( 4 + 4\varepsilon (\nu ) \right)^{N + 1} \frac{1}{\nu ^{N}}\sum\limits_{k = 0}^\infty \left( 4 + 4\varepsilon (\nu ) \right)^k \frac{1}{\nu ^k}
\\ & = \frac{4^{N } \sigma _0^{(N)}}{\nu ^{N-1}} + \mathcal{O}_N \! \left( \frac{1}{\nu ^{N} } \right) +\mathcal{O}(1) \left( 4 + o(1)\right)^{N + 1} \frac{1}{\nu ^{N} }\sum\limits_{k = 0}^\infty  \left( 4 + o(1) \right)^k \frac{1}{\nu ^k} 
\\ & = \frac{4^{N} \sigma _0^{(N)} }{\nu ^{N-1}} + \mathcal{O}_N \! \left( \frac{1}{\nu ^{N} }\right) + \mathcal{O}_N \! \left( \frac{1}{\nu ^{N} } \right) = \frac{4^{N} \sigma _0^{(N)} }{\nu ^{N-1}} + \mathcal{O}_N \! \left( \frac{1}{\nu ^{N} } \right).
\end{align*}
Substituting these results into \eqref{eq6} and applying the recurrence relation \eqref{eq7} in the form
\[
( - 1)^k  + \sum\limits_{n = 1}^k ( - 1)^{k - n} \varepsilon _n   + \sum\limits_{n = 2}^{k + 1} 4^n \sum\limits_{m = 0}^{k - n + 1} \sigma _{k - n - m + 1}^{(n)} \mathsf{A}_{n,m} (\varepsilon _1 ,\varepsilon _2 , \ldots ,\varepsilon _m )   = 0,
\]
equation \eqref{eq6} simplifies to
\[
1 = 1 + \mathcal{O}_N\! \left( \frac{1}{\nu ^{N} } \right) +  R_N (\nu )(4\nu)\sigma _1 (\nu )+ \mathcal{O}_N\! \left( \frac{1}{\nu ^{N} } \right) + \mathcal{O}_N\! \left( \frac{1}{\nu ^{N} } \right),
\]
i.e.,
\[
R_N (\nu )\sigma _1 (\nu ) =\mathcal{O}_N \!\left( \frac{1}{\nu ^{N + 1} } \right)
\]
as $\nu \to +\infty$. Since $\sigma _1 (\nu ) = \frac{1}{ 4(\nu  + 1) }$, it follows that $R_N (\nu ) = \mathcal{O}_N(\nu ^{-N})$, completing the proof of Theorem \ref{th1}.
\end{proof}

\begin{proof}[Proof of Theorem \ref{th2}] We can proceed similarly as in the case of Theorem \ref{th2}. Assume that $\nu \geq 0$. Combining the infinite product representation of the Bessel function with the definition \eqref{varphidef} of the function $\varphi_\nu$ yields
\[
\varphi_\nu (z) = z\prod\limits_{n = 1}^\infty \left( 1 - \frac{z^2 }{j_{\nu ,n}^2 } \right) 
\]
for any $z \in \mathbb{C}$. Taking the logarithm of each side and differentiating with respect to $z$, we obtain
\begin{equation}\label{logdervarphi}
\frac{\varphi_{\nu}'(z)}{\varphi_{\nu}(z)}=\frac{1}{z}-\sum_{n=1}^\infty\frac{2z}{j_{\nu,n}^2-z^2}.
\end{equation}
It is known (see \cite[Theorem 3]{brown} or the proof of \cite[Theorem 1]{bks}) that the radius of starlikeness of the function $\varphi_{\nu}$ is equal to the first positive zero of $\varphi'_{\nu}$. Consequently, \eqref{logdervarphi} vanishes at $r^\star(\varphi_\nu)$, that is
\begin{equation}\label{logdervarphi2}
\frac{1}{\left(r^\star(\varphi_\nu)\right)^2}=\sum_{n=1}^\infty\frac{2}{j_{\nu,n}^2-\left(r^\star(\varphi_\nu)\right)^2}.
\end{equation}
Now, in view of \eqref{asym2} let $r^\star(\varphi _\nu) = \sqrt {2\nu (1 + \rho (\nu ))}$, where $\rho (\nu )=\mathcal{O}(\nu^{-1})$ as $\nu \to+\infty$. With this notation, \eqref{logdervarphi2} can be re-arranged in the form
\begin{align*}
\frac{1}{2\nu (1 + \rho (\nu ))} &  = \sum\limits_{n = 1}^\infty  \frac{2}{j_{\nu ,n}^2  - 2\nu (1 + \rho (\nu ))}  = \sum\limits_{n = 1}^\infty  \frac{1}{j_{\nu ,n}^2 }\frac{2}{1 - \cfrac{2\nu (1 + \rho (\nu ))}{j_{\nu ,n}^2 }}
\\ & = 2\sum\limits_{n = 1}^\infty  \frac{1}{j_{\nu ,n}^2 }\sum\limits_{k = 0}^\infty  \frac{(2\nu (1 + \rho (\nu )))^k }{j_{\nu ,n}^{2k}} = 2\sum\limits_{k = 0}^\infty  (2\nu (1 + \rho (\nu )))^k \sum\limits_{n = 1}^\infty \frac{1}{j_{\nu ,n}^{2k + 2} } \\ & = 2\sum\limits_{k = 0}^\infty  (2\nu (1 + \rho (\nu )))^k \sigma _{k + 1} (\nu ),
\end{align*}
or
\[
\frac{1}{2} = \sum\limits_{k = 1}^\infty (1 + \rho (\nu ))^k (2\nu )^k \sigma _k (\nu ) ,
\]
provided $\nu$ is sufficiently large. The remaining part of the proof is completely analogous to that of Theorem \ref{th1}, and we leave the details to the interested reader.
\end{proof}

\section{Discussion and future work}\label{Sec4}

In this paper, by using convergent Laurent series expansions for the Rayleigh sums of the positive zeros of the Bessel function of the first kind and asymptotic inversion, we obtained complete asymptotic expansions for the radii of starlikeness of two types of normalised Bessel functions. An essential ingredient of the proofs was the fact that the radii of starlikeness of these two normalised Bessel functions are equal to the radii of univalence of the same functions, which are in turn the first positive zeros of their respective derivatives.

It is expected that the techniques employed in the paper could be useful to treat similar problems related to other special functions, such as the Kummer hypergeometric functions, the Coulomb wave functions or the $q$-Bessel functions. In the case of the Kummer hypergeometric functions, the classical approach to consider the range of the parameters when all the zeros are real does not work, however we believe that the radii of starlikeness of normalised Kummer functions can be studied through continued fractions. Regarding the Coulomb wave functions and the $q$-Bessel functions, we expect that our approach can be used directly. A possible difficulty could arise from the more complicated nature of the corresponding Rayleigh sums of the positive (and negative) zeros of these functions (cf. \cite{bmee} and \cite{ab}).

Another direction in research may be the study of the radii of convexity of the normalised Bessel-, Kummer hypergeometric-, Coulomb wave- and $q$-Bessel functions and the derivation of asymptotic expansions for these quantities when the parameters become large. Apparently, the methods of the present paper are not directly applicable in these cases, however, we believe that by an appropriate reformulation of the problems or modification of our techniques, they can be tackled successfully.

\section*{Acknowledgement} The second author was supported by a Premium Postdoctoral Fellowship of the Hungarian Academy of Sciences. The authors thank the referee for helpful comments and suggestions for improving the presentation.

\end{document}